\newcommand\blfootnote[1]{%
  \begingroup
  \renewcommand\thefootnote{}\footnote{#1}%
  \addtocounter{footnote}{-1}%
  \endgroup
}
\newtheorem{thm}{Theorem}[section]
\numberwithin{equation}{section}
\newtheorem{defin}[thm]{Definition}
\newtheorem{cor}[thm]{Corollary} 
\newtheorem{lem}[thm]{Lemma} 
\newtheorem{pr}[thm]{Proposition}
\newtheorem{rem}[thm]{Remark}
\theoremstyle{definition}
\newtheorem{ex}[thm]{Example}
\providecommand{\ceil}[1]{ \lceil #1 \rceil }
\patchcmd{\@maketitle}{\begin{center}}{\begin{flushleft}}{}{}
\patchcmd{\@maketitle}{\begin{tabular}[t]{c}}{\begin{tabular}[t]{@{}l}}{}{}
\patchcmd{\@maketitle}{\end{center}}{\end{flushleft}}{}{}
\begin{document}

\title{Euler summability method of sequences of fuzzy numbers and a Tauberian theorem}
\author{Enes Yavuz}
\date{{\small Department of Mathematics, Manisa Celal Bayar University, Manisa, Turkey.\\ E-mail: enes.yavuz@cbu.edu.tr}}
\maketitle
\thispagestyle{titlepage}
\blfootnote{\emph{Key words and phrases:} Sequences of fuzzy numbers, Euler summability method, Tauberian theorems\\\rule{0.63cm}{0cm}\emph{\!\!Mathematics Subject Classification:} 03E72, 40G05, 40E05}

\noindent\textbf{Abstract:}
\noindent We introduce Euler summability method for sequences of fuzzy numbers and state a Tauberian theorem concerning Euler summability method, of which proof provides an alternative to that of K. Knopp[Über das Eulersche Summierungsverfahren II, Math. Z. {\bf 18} (1923)] when the sequence is of real numbers. As corollaries, we extend the obtained results to series of fuzzy numbers.

\section{Introduction}\allowdisplaybreaks
\noindent A series $\sum a_n$ is said to be summable by means of the Euler summation method $E_p$ to $s$ if
\begin{eqnarray*}
\lim_{n\to\infty}\frac{1}{(p+1)^n}\sum_{k=0}^n\binom{n}{k}p^{n-k}s_k=s
\end{eqnarray*}
where $p>0$ and $s_n=\sum_{k=0}^na_k$.

Euler summability method in original version for $p=1$ was first introduced by L. Euler to accelerate the convergence of infinite series and then developed for arbitrary values of $p$ by K. Knopp\cite{knopp1,knopp2}. As well as being used in the theory of divergent series to assign sums to divergent series, Euler summability method was applied to various fields of mathematics. In approximation theory, authors used the method to improve the rate of convergence of slowly-converging series and achieved to recover exponential rate of convergence for such series\cite{speed1,speed2,speed3,speed4,speed5}. In function theory, the method was used to generate analytic continuations of functions defined by means of power series and to determine the singular points of functions\cite{analytic1,analytic2,analytic3,analytic4}. For the applications in other fields of mathematics , we refer the reader to \cite{diðer0,diðer1,diðer2,diðer3,diðer4,diðer5}.

Following the introduction of the concept of fuzzy set by Zadeh\cite{zadeh}, fuzzy set theory has developed rapidly and aroused the attention of many mathematicians from different branches. In the branch of analysis in connection with sequences and series, convergence properties of sequences and series have been given and different classes of sequences of fuzzy numbers have been introduced\cite{convergence1,convergence2,convergence3,convergence4,added,convergence5,convergence6}. Besides various summability methods in classical analysis have been extended to fuzzy analysis to deal with divergent sequences of fuzzy numbers and authors have given Tauberian conditions which guarantee the convergence of summable sequences\cite{altýn,canakriesz,canakcesaro,onder,sefa,subrahmanyan,tc,tb3,tr,yavuz3}. In addition to these studies, we now introduce the Euler summability method for fuzzy analysis and prove a Tauberian theorem stating a sufficient condition for an Euler summable sequence of fuzzy numbers to be convergent. This proof also provides an alternative to that given by K. Knopp\cite{knopp2} in case of sequences of real numbers. Additionally, analogues of obtained results are given for series of fuzzy numbers.
\section{ Preliminaries}
A \textit{fuzzy number} is a fuzzy set on the real axis, i.e. u is normal, fuzzy convex, upper semi-continuous and $\operatorname{supp}u =\overline{\{t\in\mathbb{R}:u(t)>0\}}$ is compact \cite{zadeh}.
We denote the space of fuzzy numbers by $E^1$. \textit{$\alpha$-level set} $[u]_\alpha$ of $u\in E^1$
is defined by
\begin{eqnarray*}
[u]_\alpha:=\left\{\begin{array}{ccc}
\{t\in\mathbb{R}:u(t)\geq\alpha\} & , & \qquad if \quad 0<\alpha\leq 1, \\[6 pt]
\overline{\{t\in\mathbb{R}:u(t)>\alpha\}} & , &
if \quad \alpha=0.\end{array} \right.
\end{eqnarray*}
Each $r\in\mathbb{R}$ can be regarded as a fuzzy number $\overline{r}$ defined by
\begin{eqnarray*}
\overline{r}(t):=\left\{\begin{array}{ccc}
1 & , & if \quad t=r, \\
0 & , & if \quad t\neq r.\end{array} \right.
\end{eqnarray*}
Let $u,v\in E^1$ and $k\in\mathbb{R}$. The addition and scalar multiplication are defined by
\begin{eqnarray*}
[u+v]_\alpha=[u]_\alpha +[v]_\alpha=[u^-_{\alpha}+v^-_{\alpha}, u^+_{\alpha}+v^+_{\alpha}], [k u]_\alpha=k[u]_\alpha
\end{eqnarray*}
where $[u]_\alpha=[u^-_{\alpha}, u^+_{\alpha}]$, for all $\alpha\in[0,1]$.

\begin{lem}\label{lemma}\cite{bede} The following statements hold:
\begin{itemize}
\item[(i)] $\overline{0} \in E^{1}$ is neutral element with respect to $+$, i.e., $u+\overline{0}=\overline{0}+u=u$ for all $u \in E^{1}$.
\item [(ii)] With respect to $\overline{0}$, none of $u \neq \overline{r}$, $r\in \mathbb{R}$
has opposite in $E^{1}.$
\item[(iii)] For any $a,b \in\mathbb{R}$ with $a, b \geq 0$ or $a,b \leq 0$ and any $u\in E^{1}$, we have
$(a + b) u = au + bu$. For general $a, b \in \mathbb{R}$, the above property does not hold.
\item[(iv)] For any $a \in \mathbb{R}$ and any $u, v \in E^{1}$, we have
$a (u+ v) = au + av.$
\item[(v)] For any $a, b \in \mathbb{R}$ and any $u \in E^{1}$, we have
$a (b u) = (a b) u.$
\end{itemize}
\end{lem}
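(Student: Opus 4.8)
The plan is to reduce every assertion to the level of $\alpha$-cuts, using the standard fact that a fuzzy number is uniquely determined by its family $\{[u]_\alpha\}_{\alpha\in[0,1]}$ of $\alpha$-level sets, that this family is nested by nonempty compact intervals (so $[u]_\beta\subseteq[u]_\alpha$ whenever $\alpha\le\beta$), and that two fuzzy numbers coincide iff all their $\alpha$-cuts coincide. Alongside this I would record the elementary interval-arithmetic identities: for compact intervals $I=[a,b]$, $J=[c,d]$ and $\lambda\in\mathbb{R}$ one has $I+J=[a+c,b+d]$, $\lambda I=[\lambda a,\lambda b]$ if $\lambda\ge 0$ and $\lambda I=[\lambda b,\lambda a]$ if $\lambda<0$, Minkowski addition is associative and commutative, $\lambda(I+J)=\lambda I+\lambda J$, $\lambda(\mu I)=(\lambda\mu)I$, and $(\lambda+\mu)I=\lambda I+\mu I$ whenever $\lambda$ and $\mu$ have the same sign.

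Given these, items (i), (iv), (v) are immediate. For (i), $[\overline{0}]_\alpha=[0,0]$ for every $\alpha$, hence $[u+\overline{0}]_\alpha=[u^-_\alpha+0,\,u^+_\alpha+0]=[u]_\alpha$. For (iv) and (v) one applies $\lambda(I+J)=\lambda I+\lambda J$ and $\lambda(\mu I)=(\lambda\mu)I$ to $I=[u]_\alpha$, $J=[v]_\alpha$ at each level $\alpha$. For the first part of (iii) I would split into the two cases $a,b\ge 0$ and $a,b\le 0$; in each case the orientation of $a[u]_\alpha$ and $b[u]_\alpha$ is the same, so the same-sign distributive law for intervals gives $(a+b)[u]_\alpha=a[u]_\alpha+b[u]_\alpha$ cut by cut, hence $(a+b)u=au+bu$.

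For (ii) and for the failure of (iii) in general, a little more care is needed. For (ii), suppose $u\in E^{1}$ has an opposite $v$, i.e.\ $u+v=\overline{0}$. Passing to $\alpha$-cuts gives $u^-_\alpha+v^-_\alpha=0=u^+_\alpha+v^+_\alpha$, so $v^-_\alpha=-u^-_\alpha$ and $v^+_\alpha=-u^+_\alpha$; the constraint $v^-_\alpha\le v^+_\alpha$ then forces $u^+_\alpha\le u^-_\alpha$, whence $u^-_\alpha=u^+_\alpha$ for every $\alpha\in[0,1]$. Because the cuts are nested and each is now a singleton, that common value is independent of $\alpha$, say equal to $r$; then $u(r)\ge\alpha$ for all $\alpha\in(0,1]$ and $u(t)<\alpha$ for all $\alpha>0$ when $t\ne r$, so $u=\overline{r}$. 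Contrapositively, no $u\ne\overline{r}$ has an opposite. Finally, for the last sentence of (iii), take $a=1$ and $b=-1$: then $(a+b)u=\overline{0}\cdot u=\overline{0}$, whereas $au+bu=u+(-1)u$, which by (ii) equals $\overline{0}$ only when $u$ is some $\overline{r}$; hence the identity fails for every non-crisp $u$.

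The one genuinely non-computational step is (ii): it is the only place where one must invoke the monotonicity (nestedness) of the $\alpha$-cuts together with the small observation that a fuzzy number all of whose cuts reduce to the same single point must be crisp; everything else is bookkeeping with interval arithmetic carried out level by level.
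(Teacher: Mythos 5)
Your proof is correct. Note, however, that the paper itself offers no proof of this lemma: it is imported verbatim from the cited reference (Bede and Gal), so there is nothing in the paper to compare your argument against. What you give is the standard level-set verification --- items (i), (iii) (first part), (iv), (v) reduce to elementary interval arithmetic applied cut by cut, and the only step with real content is (ii), where you correctly use that an opposite $v$ would force $v_\alpha^- = -u_\alpha^-$ and $v_\alpha^+ = -u_\alpha^+$, so that $v_\alpha^-\le v_\alpha^+$ collapses every cut of $u$ to a singleton, and nestedness makes that singleton independent of $\alpha$, i.e.\ $u=\overline{r}$. Your counterexample $a=1$, $b=-1$ for the failure of distributivity is likewise the standard one and dovetails with (ii). This is, in substance, the proof one finds in the cited source.
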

The metric $D$ on $E^1$ is defined as
\begin{eqnarray*}
 D(u,v):=
\sup_{\alpha\in[0,1]}\max\{|u^-_{\alpha}-v^-_{\alpha}|,|u^+_{\alpha}-
v^+_{\alpha}|\}.
\end{eqnarray*}
\begin{pr}\cite{bede}
\label{p02} Let $u,v,w,z\in E^1$ and $k\in\mathbb{R}$. Then,
\begin{itemize}
\item [(i)] $(E^1,D)$ is a complete metric space.
\item [(ii)] $D(ku,kv)=|k|D(u,v)$.
\item [(iii)] $D(u+v,w+v)=D(u,w)$.
\item [(iv)] $D(u+v,w+z)\leq D(u,w)+D(v,z)$.
\item [(v)] $|D(u,\overline{0})-D(v,\overline{0})|\leq D(u,v)\leq
D(u,\overline{0})+D(v,\overline{0})$.
\end{itemize}
\end{pr}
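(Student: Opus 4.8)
The plan is to verify each item from the $\alpha$-level representation $[u]_\alpha=[u^-_\alpha,u^+_\alpha]$, using only that $x\mapsto|x|$ is subadditive and absolutely homogeneous on $\mathbb{R}$ and that $\sup_\alpha$ and $\max$ are monotone. Items (ii)--(v) are then essentially bookkeeping at the level of $\alpha$-cuts, while (i) carries the one genuinely substantive point.

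For (i), I would first check that $D$ is a metric. Non-negativity and symmetry are immediate. If $D(u,v)=0$ then $u^-_\alpha=v^-_\alpha$ and $u^+_\alpha=v^+_\alpha$ for every $\alpha\in[0,1]$, hence $[u]_\alpha=[v]_\alpha$ for all $\alpha$, and since a fuzzy number is determined by its family of level sets, $u=v$. The triangle inequality reduces, for each fixed $\alpha$, to $|u^-_\alpha-w^-_\alpha|\leq|u^-_\alpha-v^-_\alpha|+|v^-_\alpha-w^-_\alpha|$ and the analogous inequality for the $+$ endpoints; taking $\max$ over the two endpoints and then $\sup_{\alpha}$ of both sides gives $D(u,w)\leq D(u,v)+D(v,w)$. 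For completeness, given a $D$-Cauchy sequence $(u_n)$ in $E^1$, the real-valued functions $\alpha\mapsto u^-_{n,\alpha}$ and $\alpha\mapsto u^+_{n,\alpha}$ form Cauchy sequences in the sup-norm, so they converge uniformly to bounded functions $f^-(\alpha)$ and $f^+(\alpha)$; one then verifies that $(f^-,f^+)$ is the endpoint pair of an actual fuzzy number $u$ and that $D(u_n,u)\to0$.

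For (ii), I would split on the sign of $k$: when $k\geq0$ one has $[ku]_\alpha=[ku^-_\alpha,ku^+_\alpha]$, so each endpoint difference scales by $k=|k|$; when $k<0$ one has $[ku]_\alpha=[ku^+_\alpha,ku^-_\alpha]$, and $|ku^+_\alpha-kv^+_\alpha|=|k|\,|u^+_\alpha-v^+_\alpha|$ with the roles of the two endpoints interchanged, so the inner $\max$ is unchanged and the factor $|k|$ again factors out; taking $\sup_\alpha$ yields $D(ku,kv)=|k|D(u,v)$. For (iii), the $v$-endpoints cancel inside each absolute value: $\bigl|(u^-_\alpha+v^-_\alpha)-(w^-_\alpha+v^-_\alpha)\bigr|=|u^-_\alpha-w^-_\alpha|$ and likewise for the $+$ endpoints, so $D(u+v,w+v)=D(u,w)$. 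Item (iv) then follows by combining the ordinary triangle inequality with (iii), namely $D(u+v,w+z)\leq D(u+v,w+v)+D(w+v,w+z)=D(u,w)+D(v,z)$. Finally, (v) is the reverse and forward triangle inequalities with third point $\overline{0}$: from $D(u,\overline{0})\leq D(u,v)+D(v,\overline{0})$ and its symmetric counterpart we get $|D(u,\overline{0})-D(v,\overline{0})|\leq D(u,v)$, and $D(u,v)\leq D(u,\overline{0})+D(\overline{0},v)$ gives the upper bound.

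The only step that requires real care is the completeness claim in (i): one must confirm that the uniform limit of the endpoint functions actually defines an element of $E^1$, i.e. that $f^-$ is bounded nondecreasing, $f^+$ is bounded nonincreasing, $f^-\leq f^+$, and both are left-continuous on $(0,1]$ and right-continuous at $0$ — the standard characterization of fuzzy numbers through their $\alpha$-cut endpoints. Each of these properties passes to uniform limits, so the argument goes through; everything else is routine manipulation of suprema of absolute values over the levels $\alpha\in[0,1]$.
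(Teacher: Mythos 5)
The paper does not prove this proposition at all: it is quoted as a known result from the cited reference (Bede and Gal), so there is no in-paper argument to compare against. Your verification is correct and is the standard one — direct computation on the $\alpha$-level endpoints for (ii)--(v), with (iv) neatly reduced to (iii) plus the triangle inequality, and completeness in (i) handled via uniform convergence of the endpoint functions together with the Goetschel--Voxman-type characterization of fuzzy numbers by their $\alpha$-cut endpoints (monotonicity, boundedness, $f^-\leq f^+$, and the one-sided continuity conditions, all of which pass to uniform limits). No gaps.
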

A sequence $(u_n)$ of fuzzy numbers  is said to be bounded if there exists $M>0$ such that $D(u_n, \bar{0})< M$ for all $n\in \mathbb{N}$. By  $\ell_{\infty}(F)$, we denote the set of all bounded sequences of fuzzy numbers.

A sequence $(u_n)$ of fuzzy numbers is said to be convergent to $\mu\in
E^1$ if for every $\varepsilon>0$ there exists an
$n_0=n_0(\varepsilon)\in\mathbb{N}$ such that $D(u_n,\mu)<\varepsilon~~\text{for all} ~~n\geq n_0.$

Let $(u_n)$ be a sequence of fuzzy numbers. Then the expression $\sum u_n$ is called a
\textit{series of fuzzy numbers}. Denote $s_n=\sum_{k=0}^nu_k$ for
all $n\in\mathbb{N}$, if the sequence $(s_n)$ converges to a fuzzy
number $\nu$ then we say that the series $\sum u_n$ of fuzzy numbers
converges to $\nu$ and write $\sum u_n=\nu$. We say otherwise the series of fuzzy numbers diverges. Additionally, if the sequence $(s_n)$ is bounded then we say that the series $\sum u_n$ of fuzzy numbers is bounded. By  $bs(F)$, we denote the set of all bounded series of fuzzy numbers.
\begin{rem}\label{change}
Let $(u_n)$ be a sequence of fuzzy numbers. If $(x_n)$ is a sequence of non-negative real numbers, then
\begin{eqnarray*}
\sum\limits_{k=0}^nx_k\sum\limits_{m=0}^ku_m=\sum\limits_{m=0}^nu_m\sum\limits_{k=m}^nx_k
\end{eqnarray*}
 holds by $(iii)$ and $(iv)$ of Lemma \ref{lemma}.
\end{rem}
\section{Main Results}
\begin{defin}
Let $(u_n)$ be a sequence of fuzzy numbers. The Euler means of $(u_n)$ is defined by
\begin{eqnarray}
t^{p}_n=\frac{1}{(p+1)^n}\sum_{k=0}^n\binom{n}{k}p^{n-k}u_k\qquad\qquad (p>0).
\end{eqnarray}
We say that $(u_n)$ is $E_p$ summable to a fuzzy number $\mu$ if
\begin{eqnarray*}
\lim_{n\to\infty}t^{p}_n=\mu.
\end{eqnarray*}
\end{defin}
\begin{thm}\label{theorem1}
If sequence $(u_n)$ of fuzzy numbers converges to $\mu \in E^1$, then $(u_n)$ is $E_p$ summable to $\mu$.
\end{thm}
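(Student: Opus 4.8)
The goal is the "regularity" (permanence) of the Euler method: convergence implies $E_p$-summability to the same limit. The natural strategy is to estimate $D(t^p_n,\mu)$ directly using the metric properties in Proposition \ref{p02}. First I would observe the key normalization identity $\frac{1}{(p+1)^n}\sum_{k=0}^n\binom{n}{k}p^{n-k}=1$, which is just the binomial theorem applied to $(p+1)^n=\sum_k\binom{n}{k}p^{n-k}\cdot 1^k$. Since all the weights $a_{nk}:=\frac{1}{(p+1)^n}\binom{n}{k}p^{n-k}$ are non-negative and sum to $1$, I can write $\mu=\sum_{k=0}^n a_{nk}\mu$ using part (iii) of Lemma \ref{lemma} (all coefficients non-negative, so distributivity is legitimate), and hence
\begin{eqnarray*}
D(t^p_n,\mu)=D\left(\sum_{k=0}^n a_{nk}u_k,\sum_{k=0}^n a_{nk}\mu\right)\leq \sum_{k=0}^n a_{nk}D(u_k,\mu),
\end{eqnarray*}
where the inequality comes from iterating (iv) of Proposition \ref{p02} and then using (ii) together with $a_{nk}\geq 0$.

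Next I would split this sum at a fixed index. Given $\varepsilon>0$, choose $N$ so that $D(u_k,\mu)<\varepsilon/2$ for all $k\geq N$; for the tail, $\sum_{k=N}^n a_{nk}D(u_k,\mu)<\frac{\varepsilon}{2}\sum_{k=N}^n a_{nk}\leq\frac{\varepsilon}{2}$. For the head $\sum_{k=0}^{N-1}a_{nk}D(u_k,\mu)$, the factors $D(u_k,\mu)$ are bounded by some constant $C$ (finitely many terms), so this is at most $C\sum_{k=0}^{N-1}a_{nk}$. The point is that for each fixed $k$, $a_{nk}=\frac{1}{(p+1)^n}\binom{n}{k}p^{n-k}\to 0$ as $n\to\infty$ (the binomial coefficient grows polynomially in $n$ while $(p+1)^n/p^{n}=(1+1/p)^n$ grows exponentially), so the finite sum $\sum_{k=0}^{N-1}a_{nk}\to 0$. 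Hence for $n$ large enough the head is $<\varepsilon/2$, giving $D(t^p_n,\mu)<\varepsilon$.

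The main technical obstacle is the careful handling of the fuzzy arithmetic: unlike the real-number case, one cannot freely write $\sum a_{nk}(u_k-\mu)$ because subtraction of fuzzy numbers is not well-behaved (Lemma \ref{lemma}(ii)), so every step must be phrased through the metric $D$ and must only invoke distributivity/additivity in the forms guaranteed by Lemma \ref{lemma}(iii)--(iv) and Proposition \ref{p02}(ii)--(iv) — in particular only with non-negative scalars. Once the estimate $D(t^p_n,\mu)\leq\sum_{k=0}^n a_{nk}D(u_k,\mu)$ is established, the remainder is the standard Toeplitz/Silverman–Toeplitz argument for a regular summability matrix, which presents no difficulty.
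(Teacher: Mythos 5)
Your proposal is correct and follows essentially the same route as the paper: bound $D(t^p_n,\mu)$ by $\sum_k a_{nk}D(u_k,\mu)$ using the normalization of the Euler weights and Proposition \ref{p02}(iv), then split the sum at a fixed index and use that each column $a_{nk}\to 0$ as $n\to\infty$ to kill the head. The only difference is cosmetic — you spell out why $a_{nk}\to 0$ for fixed $k$, which the paper leaves implicit.
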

\begin{proof}
Let $u_n\rightarrow \mu$. For any $\varepsilon>0$ there exists $n_0=n_0(\varepsilon)$ such that $D(u_n,\mu)< \frac{\varepsilon}{2}$ whenever $ n> n_0$, and for $n\leq n_0$ there exists $M > 0$ such that $D(u_n,\mu)\leq  M$. So, by $(iv)$ of Proposition \ref{p02}, we get
\begin{eqnarray*}
D\left( t^{p}_n, \mu\right)&=&D\left(\frac{1}{(p+1)^n}\sum_{k=0}^n\binom{n}{k}p^{n-k}u_k ,\mu\right)
\\&=&D\left(\frac{1}{(p+1)^n}\sum_{k=0}^n\binom{n}{k}p^{n-k}u_k ,\frac{1}{(p+1)^n}\sum_{k=0}^n\binom{n}{k}p^{n-k}\mu\right)
\\&\leq&
\frac{1}{(p+1)^n}\sum_{k=0}^n\binom{n}{k}p^{n-k}D(u_k, \mu)
\\&=&
\frac{1}{(p+1)^n}\sum_{k=0}^{n_0}\binom{n}{k}p^{n-k}D(u_k, \mu) +\frac{1}{(p+1)^n}\sum_{k=n_0+1}^{n}\binom{n}{k}p^{n-k}D(u_k, \mu)
\\&<&
\frac{M}{(p+1)^n}\sum_{k=0}^{n_0}\binom{n}{k}p^{n-k}+ \frac{\varepsilon}{2}\cdot
\end{eqnarray*}
 There also exists $n_1=n_1(\varepsilon)$ such that $\frac{1}{(p+1)^n}\sum\limits_{k=0}^{n_0}\binom{n}{k}p^{n-k}<\frac{\varepsilon}{2M}$ whenever $ n> n_1$. Then we conclude that $D\left( t^{p}_n, \mu\right)<\varepsilon$ whenever $n>\max\{n_0,n_1\}$, which completes the proof.
\end{proof}
An $E_p$ summable sequence is not necessarily convergent. This is clear from the following example.
\begin{ex}
Let $(u_n)$  be a sequence of fuzzy number  such that
$$u_n(t)=
\begin{cases}
t+(-1)^{n+1}, \quad &(-1)^{n}\leq t\leq (-1)^{n}+1\\
-t+(-1)^{n}+2,\quad &(-1)^{n}+1\leq t\leq (-1)^{n}+2\\
0,& (otherwise)
\end{cases}$$
Since $\alpha-$level set of $u_n$ is $[u_n]_{\alpha}=\left[(-1)^n+\alpha, (-1)^{n}+2-\alpha \right]$, we get
{\footnotesize\begin{eqnarray*}
\left[t^p_n\right]_{\alpha}&=&\left[\frac{1}{(p+1)^n}\sum_{k=0}^n\binom{n}{k}p^{n-k}\{(-1)^k+\alpha\}, \frac{1}{(p+1)^n}\sum_{k=0}^n\binom{n}{k}p^{n-k}\{(-1)^k+2-\alpha\}\right]=\left[\frac{(p-1)^n}{(p+1)^n}+\alpha, \frac{(p-1)^n}{(p+1)^n}+2-\alpha\right].
\end{eqnarray*}}
Then sequence of Euler means $(t^p_n)$ converges to fuzzy number $\mu$ defined by
$$\mu(t)=
\begin{cases}
t, \quad &0\leq t\leq 1\\
2-t, \quad &1\leq t\leq 2\\
0,& (otherwise),
\end{cases}$$
since $D\left(t^p_n,\mu\right)=\frac{(p-1)^n}{(p+1)^n}\to 0$ as $n\to\infty$. However sequence $(u_n)$ does not converge to any fuzzy number.
\end{ex}
Now we give a Tauberian theorem stating a condition under which $E_p$ summability of sequences of fuzzy numbers implies convergence. But before stating this theorem, we refer an inequality for the estimation of cumulative distribution function of binomial law.

Let $X_{n,p}$ be a random variable having the binomial distribution with parameters $(n,p)$ and let $P\{X_{n,p}\leq k\}$ represent the CDF of $X_{n,p}$ for $k\in\{0,1,\ldots, n\}$:
\begin{eqnarray*}
P\{X_{n,p}\leq k\}=\sum_{i=0}^k\binom{n}{i}p^i(1-p)^{n-i}.
\end{eqnarray*}
The following inequality concerning approximation of $P\{X_{n,p}\leq k\}$ has been proved recently by Zubkov and Serov\cite{zubkov}.
\begin{thm}\label{CDF}
Let $H(x,p)=x\ln\left(\frac{x}{p}\right)+(1-x)\ln\left(\frac{1-x}{1-p}\right)$, and let increasing sequences $(C_{n,p}(k))$ be defined as follows : $C_{n,p}(0)=(1-p)^n$, $ C_{n,p}(n)=1-p^n$,
\begin{eqnarray*}
C_{n,p}(k)=\Phi\left(sgn(k-np)\sqrt{2nH\!\!\left(\tfrac{k}{n}, p\right)}\ \right)\qquad \text{for}\qquad 1\leq k<n
\end{eqnarray*}
where $\Phi$ is CDF of the standard normal distribution. Then for every $k\in\{0,1,\ldots, n-1\}$ and for every $p\in(0,1)$
\begin{eqnarray*}
C_{n,p}(k)\leq P\{X_{n,p}\leq k\}\leq C_{n,p}(k+1)
\end{eqnarray*}
and inequalities may happen for $k=0$ or $k=n-1$ only.
\end{thm}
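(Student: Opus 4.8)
The statement is the Zubkov--Serov estimate, so the plan below stays close to their circle of ideas. Throughout write $F_{n,p}(k)=P\{X_{n,p}\le k\}$ and $q=1-p$. The first step is a symmetry reduction. Since $X_{n,p}$ and $n-X_{n,q}$ have the same law, $F_{n,p}(k)=1-F_{n,q}(n-k-1)$; and from $H(x,p)=H(1-x,q)$, $\Phi(-t)=1-\Phi(t)$ and $\operatorname{sgn}(k-np)=-\operatorname{sgn}\bigl((n-k)-nq\bigr)$ one checks directly that $C_{n,p}(k)=1-C_{n,q}(n-k)$ for every admissible index, boundary values included. Combining these, the double inequality $C_{n,p}(k)\le F_{n,p}(k)\le C_{n,p}(k+1)$ for a triple $(n,p,k)$ with $k<np$ is \emph{equivalent} to the same double inequality for $(n,q,n-k-1)$, whose index satisfies $n-k-1\ge np-1$; hence, up to at most one boundary index checked by hand, it suffices to treat $k\ge np$. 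For such $k$ both occurrences of $\operatorname{sgn}$ in the definition of $C$ equal $+1$, and, putting $m=k+1$, the claim becomes the pair of upper-tail estimates
\[
P\{X_{n,p}\ge m\}\le\Phi\!\left(-\sqrt{2n\,H\!\left(\tfrac{m-1}{n},p\right)}\right)\qquad\text{and}\qquad P\{X_{n,p}\ge m\}\ge\Phi\!\left(-\sqrt{2n\,H\!\left(\tfrac{m}{n},p\right)}\right),
\]
to be proved for $np\le m-1\le n-1$; the degenerate case $m=n$, where $P\{X_{n,p}\ge n\}=p^{n}$, is dealt with directly.

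For the first inequality I would use the incomplete-beta representation $P\{X_{n,p}\ge m\}=n\binom{n-1}{m-1}\int_{0}^{p}t^{m-1}(1-t)^{n-m}\,dt$ and compare the two sides through their $p$-derivatives on $0<p<\tfrac{m-1}{n}$, both sides vanishing at $p=0$. The derivative of the left side, $n\binom{n-1}{m-1}p^{m-1}(1-p)^{n-m}=\tfrac{n-m+1}{1-p}\binom{n}{m-1}p^{m-1}(1-p)^{n-m+1}$, is bounded above via Robbins' form of Stirling's bounds by $\tfrac{n-m+1}{1-p}\sqrt{\tfrac{n}{2\pi(m-1)(n-m+1)}}\,e^{-nH((m-1)/n,\,p)}$, while the derivative of the right side equals $\tfrac{1}{\sqrt{2\pi}}\,e^{-nH((m-1)/n,\,p)}\cdot\tfrac{(m-1)-np}{p(1-p)\sqrt{2nH((m-1)/n,\,p)}}$, carrying the \emph{same} exponential factor. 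Cancelling it, the inequality between derivatives reduces to the one-variable estimate
\[
2\,H(x,p)\,(1-x)\,p^{2}\ \le\ x\,(x-p)^{2}\qquad(p\le x<1),
\]
which is an equality to second order at $x=p$ with a positive cubic correction and is a routine calculus check; integrating from $p=0$ then gives the claim. By the symmetry set up above, this also yields $F_{n,p}(k)\le C_{n,p}(k+1)$ for $k<np$.

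The second inequality, $P\{X_{n,p}\ge m\}\ge\Phi(-\sqrt{2nH(m/n,p)})$, is where the real work lies. The same derivative comparison on $0<p<\tfrac mn$ is available (at $p=\tfrac mn$ the left side is $\ge\tfrac12$ because $n\cdot\tfrac mn=m$ is an integer and hence a median), but the derivative of the left side, $n\binom{n-1}{m-1}p^{m-1}(1-p)^{n-m}=n\cdot\bigl[\text{Bin}(n-1,p)\text{-mass at }m-1\bigr]$, now naturally carries the exponent $(n-1)H\!\left(\tfrac{m-1}{n-1},p\right)$, which is strictly below the exponent $nH\!\left(\tfrac mn,p\right)$ appearing on the right but not by a margin uniform enough to close the estimate by crude Stirling bounds --- the inequality is essentially tight for $m$ near $np$ and, separately, for $p$ near $0$ or $1$. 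I would therefore prove this half by induction on $n$, carrying the full two-sided statement: Pascal's rule gives $F_{n,p}(k)=q\,F_{n-1,p}(k)+p\,F_{n-1,p}(k-1)$, the inductive hypothesis bounds each summand by the appropriate value of $C_{n-1,p}$, and the step collapses to the purely analytic inequality
\[
q\,C_{n-1,p}(k+1)+p\,C_{n-1,p}(k)\ \le\ C_{n,p}(k+1)
\]
(together with its mirror for the lower bound) --- a statement about $\Phi$ and $H$ with no probability left in it, which one attacks by fixing $k,n$ and studying both sides as functions of $p$, using $\partial_{p}H(x,p)=\tfrac{p-x}{p(1-p)}$ and $\Phi'(t)=\tfrac{1}{\sqrt{2\pi}}e^{-t^{2}/2}$. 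The base cases ($n$ small) are verified by hand.

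The main obstacle is precisely this last analytic inequality (equivalently, the lower tail bound): it is sharper than what convexity of $x\mapsto nH(x,p)$ delivers, it is nearly an equality in several regimes, and one must simultaneously track the boundary situations in which the inductive hypothesis is not applied literally (the index $m=n$, the corner $k=0$, and the $p\leftrightarrow q$ bookkeeping of the first step). A purely derivative-based proof avoiding the induction would have to beat the $n$-versus-$(n-1)$ mismatch of exponents noted above, which I expect to need the second-order de Moivre--Laplace refinement rather than plain Stirling.
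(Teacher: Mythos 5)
First, a point of order: the paper does not prove this statement. Theorem \ref{CDF} is imported verbatim from Zubkov and Serov \cite{zubkov} and serves only as an external input to the proof of Theorem \ref{tauberian}, so there is no in-paper argument to measure your attempt against; the relevant benchmark is the original paper, whose argument is a direct analytic one (incomplete-beta representation and comparison of $p$-derivatives for both halves), not an induction on $n$.

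Judged as a standalone proof, your proposal has a genuine gap, and you say so yourself. The symmetry reduction $C_{n,p}(k)=1-C_{n,q}(n-k)$ is correct, and your treatment of the upper half $P\{X_{n,p}\ge m\}\le\Phi\bigl(-\sqrt{2nH((m-1)/n,p)}\bigr)$ is essentially right: I checked that the derivative comparison does reduce to $2H(x,p)(1-x)p^{2}\le x(x-p)^{2}$ on $p\le x<1$, and that this inequality agrees with equality through second order at $x=p$ with a strictly positive third-order defect, though the ``routine calculus check'' of its global validity is still owed. The lower half, however --- which is the substantive content of the theorem and the reason \cite{zubkov} exists --- is not proved. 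You reduce it to the inductive inequality $q\,C_{n-1,p}(k+1)+p\,C_{n-1,p}(k)\le C_{n,p}(k+1)$ and offer no evidence that this step is true; it is a sharp statement about $\Phi$ and $H$ that is nearly an equality in several regimes (as you note), and if it fails at even one interior index the induction collapses. You also leave unresolved the boundary indices $k=0$ and $k+1=n$, where $C$ is defined by a different formula and the inductive hypothesis does not apply literally. Naming the main obstacle is not the same as surmounting it: a complete proof must either establish that analytic inequality or return to the direct derivative comparison and beat the $n$-versus-$(n-1)$ exponent mismatch you correctly diagnose, which requires a two-sided refinement of Stirling rather than the one-sided Robbins bound that suffices for the upper half. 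Since the host paper simply cites the result, the appropriate course in this context is to do the same rather than to leave a half-proved replacement in its place.
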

\begin{thm}\label{tauberian}
If sequence $(u_n)$ of fuzzy numbers is $E_p$ summable to fuzzy number $\mu$ and  $\sqrt{n} D(u_{n-1}, u_{n})=o(1)$, then $(u_n)$ converges to $\mu$.
\end{thm}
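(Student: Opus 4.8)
The plan is to compare $u_n$ with a carefully chosen Euler mean, exploiting the fact that an Euler mean is a binomial average concentrated around a predictable index. Write $w_{N,k}:=\binom{N}{k}p^{N-k}(p+1)^{-N}\ge 0$; by the binomial theorem $\sum_{k=0}^{N}w_{N,k}=1$, and $t^{p}_{N}=\sum_{k=0}^{N}w_{N,k}u_k$. The array $(w_{N,k})_{0\le k\le N}$ is precisely the distribution of the random variable $X_{N,\,1/(p+1)}$ of Theorem~\ref{CDF}, with mean $N/(p+1)$ and variance $Np(p+1)^{-2}$. For each $n$ I would set $N=N_n:=\lceil (p+1)n\rceil$, so that $N_n\to\infty$ and $|N_n/(p+1)-n|\le 1$: the weights $(w_{N_n,k})_k$ are then centred within distance $1$ of $k=n$. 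Since $(u_n)$ is $E_p$ summable to $\mu$ we have $D(t^{p}_{N_n},\mu)\to 0$, so it suffices to prove $D(u_n,t^{p}_{N_n})\to 0$.

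Arguing exactly as in the proof of Theorem~\ref{theorem1} (use Lemma~\ref{lemma}(iii) to write $u_n=\sum_{k}w_{N_n,k}u_n$, then Proposition~\ref{p02}(ii),(iv)), one obtains
\begin{eqnarray*}
D\big(u_n,t^{p}_{N_n}\big)\ \le\ \sum_{k=0}^{N_n}w_{N_n,k}\,D(u_n,u_k).
\end{eqnarray*}
Set $d_j:=D(u_{j-1},u_j)$. The triangle inequality in $(E^1,D)$ gives $D(u_n,u_k)\le\sum_{j}d_j$, where $j$ runs over the $|k-n|$ integers in the half-open interval between $k$ and $n$; in particular $D(u_n,u_k)\le |k-n|\,\max_j d_j$ over that range. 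The hypothesis $\sqrt n\,d_n=o(1)$ yields $\eta_n:=\sup_{j\ge n/2}\sqrt j\,d_j\to 0$, hence $d_j\le \eta_n\sqrt{2/n}$ whenever $j\ge n/2$; also $d_j\le C_0$ for all $j$ and some constant $C_0$.

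I would then split the last sum at $|k-n|\le n/2$. In the bulk every index $j$ entering the estimate of $D(u_n,u_k)$ satisfies $j\ge n/2$, so $D(u_n,u_k)\le |k-n|\,\eta_n\sqrt{2/n}$, and therefore
\begin{eqnarray*}
\sum_{|k-n|\le n/2}w_{N_n,k}\,D(u_n,u_k)\ \le\ \eta_n\sqrt{2/n}\sum_{k=0}^{N_n}w_{N_n,k}\,|k-n|\ \le\ \eta_n\sqrt{2/n}\,\big(N_n p(p+1)^{-2}+1\big)^{1/2}=O(\eta_n),
\end{eqnarray*}
by the Cauchy--Schwarz inequality, since $\mathrm{Var}\big(X_{N_n,\,1/(p+1)}\big)=N_n p(p+1)^{-2}$ and $|N_n/(p+1)-n|\le1$. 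For the tail I would use the crude bound $D(u_n,u_k)\le C_0 N_n$ together with Theorem~\ref{CDF}: as the mean of $X_{N_n,\,1/(p+1)}$ lies within $1$ of $n$, every $k$ with $|k-n|>n/2$ is at distance $\gtrsim n$ from it, so the Zubkov--Serov bound gives $\sum_{|k-n|>n/2}w_{N_n,k}=P\{|X_{N_n,\,1/(p+1)}-n|>n/2\}\le e^{-cn}$ for some $c=c(p)>0$, whence this part is $\le C_0 N_n e^{-cn}\to0$. Adding the two contributions gives $D(u_n,t^{p}_{N_n})\to0$, and together with $D(t^{p}_{N_n},\mu)\to0$ this yields $u_n\to\mu$.

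The delicate point is the bulk estimate: one must not bound $D(u_n,u_k)$ by a single constant over a window, but keep the factor $|k-n|$, so that the quantity to be controlled is the first absolute moment $\sum_k w_{N_n,k}|k-n|$ of the binomial law, which is of exact order $\sqrt n$ — precisely the order that the Tauberian hypothesis $d_j=o(1/\sqrt j)$ is designed to absorb. The remaining ingredients (the reduction above, the telescoping triangle inequality, and the exponentially small binomial tail supplied by Theorem~\ref{CDF}) are routine, and the mild non-integrality of $(p+1)n$ only shifts the binomial mean by a bounded amount, which is harmless.
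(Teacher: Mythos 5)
Your argument is correct, but it reaches the conclusion by a genuinely different route from the paper's. The paper first upgrades $E_p$ summability to $E_q$ summability with $q=\lceil p\rceil$ (via the composition identity showing $t^{(\lceil p\rceil-p)/(p+1)}_n(t^p_n)=t^{\lceil p\rceil}_n$), precisely so that it can compare $u_n$ with $t^{q}_{(q+1)n}$, whose binomial weights have mean exactly $n$; you instead keep $p$ and take $N_n=\lceil (p+1)n\rceil$, accepting a mean shifted by at most $1$, which is equally legitimate since $E_p$ summability gives $D(t^p_{N_n},\mu)\to0$ along any subsequence. More substantially, at the heart of the proof the paper evaluates the first absolute moment $\sum_k w_{N,k}|k-n|$ \emph{exactly}, using $k\binom{N}{k}=N\binom{N-1}{k-1}$ to reduce it to a difference of two binomial distribution functions, and then needs the two-sided Zubkov--Serov inequality of Theorem \ref{CDF} together with an expansion of $\operatorname{erf}$ to show that $\sqrt n\,(\sum_1-\sum_2)$ converges. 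You shortcut this with Cauchy--Schwarz: the first absolute moment about $n$ is at most the square root of the second moment, i.e.\ $(\operatorname{Var}X_{N_n,1/(p+1)}+1)^{1/2}=O(\sqrt n)$, which is all the Tauberian hypothesis requires. The price of your localization $\eta_n=\sup_{j\ge n/2}\sqrt j\,d_j$ is a separate tail estimate, for which the exponentially small binomial tail from Theorem \ref{CDF} (or any Chernoff-type bound) amply suffices. In fact your bulk/tail split handles more carefully a point the paper glosses over, namely the uniformity in $k$ of the bound $D(u_k,u_n)=o(1)\,|n-k|/\sqrt n$, which for $k$ far below $n$ needs an averaging argument rather than following directly from $\sqrt n\,D(u_{n-1},u_n)=o(1)$. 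What the paper's exact computation buys is the sharp asymptotic constant and the direct link to Knopp's classical theorem; what yours buys is a shorter, more elementary and more robust treatment of the key moment estimate.
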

\begin{proof}
Let sequence $(u_n)$ of fuzzy numbers be $E_p$ summable to fuzzy number $\mu$ and $\sqrt{n} D(u_{n-1}, u_{n})=o(1)$ be satisfied. Then $(u_n)$ is also $E_{\ceil{p}}$ summable to fuzzy number $\mu$, by the following fact:
\begin{eqnarray*}
t^{\frac{\ceil{p}-p}{p+1}}_{n}(t_n^p)&=&\frac{1}{\left(\frac{\ceil{p}-p}{p+1}+1\right)^n}\sum_{k=0}^n\binom{n}{k}\left(\frac{\ceil{p}-p}{p+1}\right)^{n-k}\left\{\frac{1}{(p+1)^k}\sum_{m=0}^k\binom{k}{m}p^{k-m}u_m\right\}
\\&=&
\frac{1}{\left(\ceil{p}+1\right)^n}\sum_{k=0}^n\binom{n}{k}\left(\ceil{p}-p\right)^{n-k}\left\{\sum_{m=0}^k\binom{k}{m}p^{k-m}u_m\right\}
\\&=&
\frac{1}{\left(\ceil{p}+1\right)^n}\sum_{m=0}^nu_m\left\{\sum_{k=m}^n\binom{n}{k}\binom{k}{m}\left(\ceil{p}-p\right)^{n-k}p^{k-m}\right\}
\\&=&
\frac{1}{\left(\ceil{p}+1\right)^n}\sum_{m=0}^n\binom{n}{m}u_m\left\{\sum_{k=0}^{n-m}\binom{n-m}{k}\left(\ceil{p}-p\right)^{n-m-k}p^{k}\right\}
\\&=&
\frac{1}{\left(\ceil{p}+1\right)^n}\sum_{m=0}^n\binom{n}{m}{\ceil{p}}^{n-m}u_m=t_n^{\ceil{p}}
\end{eqnarray*}
where $\ceil{\cdot}$ denotes ceiling function. For brevity we take $q=\ceil{p}$ throughout the proof. Now, let prove the convergence of sequence $(u_n)$ to fuzzy number $\mu$. We can write
\begin{eqnarray*}
D(u_n,\mu)\leq D\left(u_n,t^{q}_{(q+1)n}\right)+D\left(t^{q}_{(q+1)n},\mu\right)
\end{eqnarray*}
and since $(u_n)$ is $E_{q}$ summable to fuzzy number $\mu$ we know that $D\left(t^{q}_{(q+1)n},\mu\right)=o(1)$. So it is sufficient to show that $D\left(u_n,t^{q}_{(q+1)n}\right)=o(1)$. By $(iv)$ of Proposition \ref{p02} and by the condition $\sqrt{n} D(u_{n-1}, u_{n})=o(1)$ of the theorem we get
{\small\begin{eqnarray*}
D\left(t^{q}_{(q+1)n}, u_n\right)&\leq& \frac{1}{(q+1)^{(q+1)n}}\sum_{k=0}^{(q+1)n}\binom{(q+1)n}{k}q^{(q+1)n-k}D(u_k,u_n)
\\&=&
o(1)\Bigg\{\frac{1}{(q+1)^{(q+1)n}}\sum_{k=0}^{(q+1)n}\binom{(q+1)n}{k}q^{(q+1)n-k}\left\{\frac{|n-k|}{\sqrt{n}}\right\}\Bigg\}
\\&=&
o(1)\Bigg\{\frac{\sqrt{n}}{(q+1)^{(q+1)n}}\sum_{k=0}^{n}\binom{(q+1)n}{k}q^{(q+1)n-k} -\frac{1}{\sqrt{n}(q+1)^{(q+1)n}}\sum_{k=0}^{n}k\binom{(q+1)n}{k}q^{(q+1)n-k}
\\&&
\qquad\quad+\frac{1}{\sqrt{n}(q+1)^{(q+1)n}}\sum_{k=n+1}^{(q+1)n}k\binom{(q+1)n}{k}q^{(q+1)n-k}-\frac{\sqrt{n}}{(q+1)^{(q+1)n}}\sum_{k=n+1}^{(q+1)n}\binom{(q+1)n}{k}q^{(q+1)n-k}\Bigg\}
\\&=&
{\mathsmaller{\mathsmaller o(1)\sqrt{n}\Bigg\{\sum\limits_{k=0}^{n}\binom{(q+1)n}{k}\left(\frac{1}{q+1}\right)^{k}\left(\frac{q}{q+1}\right)^{(q+1)n-k}-\sum\limits_{k=0}^{n-1}\binom{(q+1)n-1}{k}\left(\frac{1}{q+1}\right)^k\left(\frac{q}{q+1}\right)^{(q+1)n-1-k}}}
\\&&
\qquad\quad\quad{\mathsmaller{\mathsmaller+\sum\limits_{k=n}^{(q+1)n-1}\binom{(q+1)n-1}{k}\left(\frac{1}{q+1}\right)^k\left(\frac{q}{q+1}\right)^{(q+1)n-1-k}-\sum\limits_{k=n+1}^{(q+1)n}\binom{(q+1)n}{k}\left(\frac{1}{q+1}\right)^{k}\left(\frac{q}{q+1}\right)^{(q+1)n-k}}}\Bigg\}
\\&=&
{\mathsmaller {\mathsmaller o(1)2\sqrt{n}\left\{\sum\limits_{k=0}^{n}\binom{(q+1)n}{k}\left(\frac{1}{q+1}\right)^{k}\left(\frac{q}{q+1}\right)^{(q+1)n-k}-\sum\limits_{k=0}^{n-1}\binom{(q+1)n-1}{k}\left(\frac{1}{q+1}\right)^k\left(\frac{q}{q+1}\right)^{(q+1)n-1-k}\right\}}}
\\&=&
o(1)2\sqrt{n}\left\{\sum\nolimits_1-\sum\nolimits_2\right\}.
\end{eqnarray*}}
By Theorem \ref{CDF}, we obtain
\begin{eqnarray*}
\sum\nolimits_1&\leq&C_{{(q+1)n},{\frac{1}{q+1}}}(n+1)
\\&=&
\Phi\left(\sqrt{2(q+1)nH\!\!\left(\frac{n+1}{(q+1)n},\frac{1}{q+1}\right)}\ \right)=\Phi\left(\sqrt{2\ln{\left(\left\{\frac{n+1}{n}\right\}^{n+1}\left\{\frac{qn-1}{qn}\right\}^{qn-1}\right)}}\ \right)
\\&=&
\frac{1}{2}+\frac{1}{2}\text{erf}\left(\sqrt{\ln{\left(\left\{\frac{n+1}{n}\right\}^{n+1}\left\{\frac{qn-1}{qn}\right\}^{qn-1}\right)}}\ \right)
\end{eqnarray*}
and
\begin{eqnarray*}
 \sum\nolimits_2&\geq&C_{{(q+1)n-1},{\frac{1}{q+1}}}(n-1)
\\&=&
{\mathsmaller{\mathsmaller\Phi\left(-\sqrt{2((q+1)n-1)H\!\!\left(\frac{n-1}{(q+1)n-1},\frac{1}{q+1}\right)}\ \right)=\Phi\left(-\sqrt{2\ln{\left(\left\{\frac{(q+1)(n-1)}{(q+1)n-1}\right\}^{n-1}\left\{\frac{(q+1)n}{(q+1)n-1}\right\}^{qn}\right)}}\ \right)}}
\\&=&
\frac{1}{2}-\frac{1}{2}\text{erf}\left(\sqrt{\ln{\left(\left\{\frac{(q+1)(n-1)}{(q+1)n-1}\right\}^{n-1}\left\{\frac{(q+1)n}{(q+1)n-1}\right\}^{qn}\right)}}\ \right).
\end{eqnarray*}
So we get
{\scriptsize\begin{eqnarray*}
D\left(t^{q}_{(q+1)n}, u_n\right)&=& o(1)\sqrt{n}\left\{\text{erf}\left(\sqrt{\ln{\left(\left\{\frac{n+1}{n}\right\}^{n+1}\left\{\frac{qn-1}{qn}\right\}^{qn-1}\right)}}\ \right)+\text{erf}\left(\sqrt{\ln{\left(\left\{\frac{(q+1)(n-1)}{(q+1)n-1}\right\}^{n-1}\left\{\frac{(q+1)n}{(q+1)n-1}\right\}^{qn}\right)}}\ \right)\right\}
\\&=&
 o(1)\frac{2}{\sqrt{\pi}}\sqrt{n}\left\{\sqrt{\ln{\left(\left\{\frac{n+1}{n}\right\}^{n+1}\left\{\frac{qn-1}{qn}\right\}^{qn-1}\right)}}\ +\sqrt{\ln{\left(\left\{\frac{(q+1)(n-1)}{(q+1)n-1}\right\}^{n-1}\left\{\frac{(q+1)n}{(q+1)n-1}\right\}^{qn}\right)}}\ \right\}
\\&=&
o(1)\frac{2}{\sqrt{\pi}}\Bigg\{\underbrace{ \sqrt{\ln\left(\left\{\frac{n+1}{n}\right\}^{n+1}\left\{\frac{qn-1}{qn}\right\}^{qn-1}\right)^n}}_{\longrightarrow \sqrt{\frac{q+1}{2q}}\quad (n\to\infty)}\ +\underbrace{\sqrt{\ln{\left(\left\{\frac{(q+1)(n-1)}{(q+1)n-1}\right\}^{n-1}\left\{\frac{(q+1)n}{(q+1)n-1}\right\}^{qn}\right)^n}}}_{\longrightarrow \sqrt{\frac{q}{2(q+1)}}\quad (n\to\infty)} \Bigg\}=o(1),
\end{eqnarray*}}
which completes the proof.
\end{proof}
\begin{thm}
If sequence $(u_n)$ of fuzzy numbers is $E_p$ summable to fuzzy number $\mu$ and  $\sqrt{n} D(u_{n-1}, u_{n})=O(1)$, then $(u_n)\in\ell_{\infty}(F)$.
\end{thm}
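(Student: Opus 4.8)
The plan is to re-run the proof of Theorem \ref{tauberian} with the $o(1)$ Tauberian hypothesis replaced throughout by the $O(1)$ one, and to weaken the conclusion from convergence to boundedness accordingly. As a first step, putting $q=\ceil{p}$, the identity $t_n^{\frac{q-p}{p+1}}(t_n^p)=t_n^q$ derived in the proof of Theorem \ref{tauberian} shows that $(u_n)$ is also $E_q$ summable to $\mu$, so that $D\!\left(t_{(q+1)n}^q,\mu\right)=o(1)$. From the triangle inequality
\begin{eqnarray*}
D(u_n,\overline{0})\leq D\!\left(u_n,t_{(q+1)n}^q\right)+D\!\left(t_{(q+1)n}^q,\mu\right)+D(\mu,\overline{0}),
\end{eqnarray*}
in which the second term is $o(1)$ and the third is a fixed constant, it then suffices to prove $D\!\left(u_n,t_{(q+1)n}^q\right)=O(1)$.

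For this last estimate I would first note that $\sqrt{n}\,D(u_{n-1},u_n)=O(1)$ yields a constant $C$ with $D(u_{j-1},u_j)\leq C/\sqrt{j}$ for all $j$, whence, summing along consecutive indices and comparing with $\int dx/\sqrt{x}$,
\begin{eqnarray*}
D(u_k,u_n)\leq 2C\left|\sqrt{n}-\sqrt{k}\right|=\frac{2C\,|n-k|}{\sqrt{n}+\sqrt{k}}\leq \frac{2C\,|n-k|}{\sqrt{n}},
\end{eqnarray*}
that is, $D(u_k,u_n)=O(1)\frac{|n-k|}{\sqrt{n}}$ uniformly in $k$ and $n$ — the exact analogue of, and in fact cleaner than, the bound used in Theorem \ref{tauberian}. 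Feeding this into $D\!\left(t_{(q+1)n}^q,u_n\right)\leq\frac{1}{(q+1)^{(q+1)n}}\sum_{k=0}^{(q+1)n}\binom{(q+1)n}{k}q^{(q+1)n-k}D(u_k,u_n)$ and carrying out precisely the same manipulations as there — splitting the sum at $k=n$, re-indexing the binomial coefficients, and bounding the resulting partial binomial sums $\sum\nolimits_1,\sum\nolimits_2$ by means of Theorem \ref{CDF} — collapses the estimate to $O(1)$ times the quantity $\frac{2}{\sqrt{\pi}}\big(\sqrt{\ln(\cdots)^n}+\sqrt{\ln(\cdots)^n}\big)$ that appears at the end of the proof of Theorem \ref{tauberian}, whose two summands were shown there to converge to $\sqrt{\tfrac{q+1}{2q}}$ and $\sqrt{\tfrac{q}{2(q+1)}}$. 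That quantity is therefore bounded, so $D\!\left(t_{(q+1)n}^q,u_n\right)=O(1)$, and the displayed triangle inequality gives $D(u_n,\overline{0})=O(1)$, i.e. $(u_n)\in\ell_\infty(F)$.

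There is no genuine obstacle here: the whole force of the argument lies in an observation already implicit in the proof of Theorem \ref{tauberian}, namely that the factor $\sqrt{n}\{\sum\nolimits_1-\sum\nolimits_2\}$ does not vanish but converges to the finite (and generally nonzero) constant $\frac{2}{\sqrt{\pi}}\big(\sqrt{\tfrac{q+1}{2q}}+\sqrt{\tfrac{q}{2(q+1)}}\big)$; it was only the $o(1)$ Tauberian factor that drove the full expression to $0$ in that theorem. Replacing it by an $O(1)$ factor leaves a bounded product, which is exactly the present claim. The only points demanding a little care are the uniformity in $k,n$ of the bound $D(u_k,u_n)=O(1)\,|n-k|/\sqrt{n}$ and the verbatim reusability of the binomial/CDF manipulations, both of which are immediate.
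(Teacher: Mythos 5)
Your proposal is correct and follows essentially the same route as the paper's proof: pass to $E_q$ summability with $q=\lceil p\rceil$ via the identity from Theorem \ref{tauberian}, apply the triangle inequality through $t^{q}_{(q+1)n}$, and rerun the binomial/CDF estimate with the $O(1)$ hypothesis in place of $o(1)$. Your explicit derivation of the uniform bound $D(u_k,u_n)=O(1)\frac{|n-k|}{\sqrt{n}}$ by telescoping and comparison with $\int x^{-1/2}\,dx$ is a detail the paper merely asserts, but it does not change the argument.
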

\begin{proof}
Let sequence $(u_n)$ of fuzzy numbers be $E_p$ summable to fuzzy number $\mu$ and $\sqrt{n} D(u_{n-1}, u_{n})=O(1)$ be satisfied. From the proof Theorem \ref{tauberian}, $(u_n)$ is $E_q$ summable to $\mu$ where $q=\ceil{p}$. We also have
\begin{eqnarray*}
D(u_n,\bar{0})\leq D\left(u_n,t^{q}_{(q+1)n}\right)+D\left(t^{q}_{(q+1)n},\mu\right)+D(\mu,\bar{0}).
\end{eqnarray*}
Since $(u_n)$ is $E_q$ summable to $\mu$, $D\left(t^{q}_{(q+1)n},\mu\right)=O(1)$ holds. It is sufficient to show that $D\left(u_n,t^{q}_{(q+1)n}\right)=O(1)$. By hypothesis $\sqrt{n} D(u_{n-1}, u_{n})=O(1)$ of theorem,  we have $D(u_{k}, u_{n})=O(1)\frac{|n-k|}{\sqrt{n}}$. Processing as in the proof of Theorem \ref{tauberian} we obtain
{\footnotesize\begin{eqnarray*}
D\left(u_n,t^{q}_{(q+1)n}\right)=O(1)\frac{2}{\sqrt{\pi}}\Bigg\{\underbrace{ \sqrt{\ln\left(\left\{\frac{n+1}{n}\right\}^{n+1}\left\{\frac{qn-1}{qn}\right\}^{qn-1}\right)^n}}_{\longrightarrow \sqrt{\frac{q+1}{2q}}\quad (n\to\infty)}\ +\underbrace{\sqrt{\ln{\left(\left\{\frac{(q+1)(n-1)}{(q+1)n-1}\right\}^{n-1}\left\{\frac{(q+1)n}{(q+1)n-1}\right\}^{qn}\right)^n}}}_{\longrightarrow \sqrt{\frac{q}{2(q+1)}}\quad (n\to\infty)} \Bigg\},
\end{eqnarray*}}
which yields $D\left(u_n,t^{q}_{(q+1)n}\right)=O(1)$ as $n\to\infty$ and proof is completed.
\end{proof}
To any series of fuzzy numbers there corresponds a sequence of partial sums which is also of fuzzy numbers. So $E_p$ summability of a series of fuzzy numbers can be determined by $E_p$ summability of corresponding sequence of partial sums. As a result we can extend the results above to series of fuzzy numbers. At this point we note that, the converse result extensions from  series of fuzzy numbers to sequences of fuzzy numbers  may not always be possible since there are sequences of fuzzy numbers which can not be represented by series of fuzzy numbers. For details we refer to\cite{yavuz1}.
\begin{defin}
A series $\sum u_n$ of fuzzy numbers is said to be $E_p$ summable to fuzzy number $\nu$ if the sequence of partial sums of the series $\sum u_n$  is $E_p$ summable to $\nu$.
\end{defin}
\begin{cor}
If series $\sum u_n$ of fuzzy numbers converges to fuzzy number $\nu$, then it is $E_p$ summable to $\nu$.
\end{cor}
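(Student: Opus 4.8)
The plan is to reduce the statement directly to Theorem \ref{theorem1} by passing to the sequence of partial sums, so essentially no new work is needed. First I would unwind the hypothesis: by the definition of convergence of a series of fuzzy numbers, saying that $\sum u_n$ converges to $\nu$ means precisely that the sequence of partial sums $(s_n)$, where $s_n=\sum_{k=0}^n u_k$, converges to $\nu$ in the metric $D$. Thus the hypothesis immediately supplies a convergent sequence of fuzzy numbers.

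Next I would apply Theorem \ref{theorem1} to the sequence $(s_n)$: since $(s_n)$ converges to $\nu\in E^1$, that theorem gives that $(s_n)$ is $E_p$ summable to $\nu$, i.e. $\lim_{n\to\infty} t^p_n=\nu$ where $t^p_n=\frac{1}{(p+1)^n}\sum_{k=0}^n\binom{n}{k}p^{n-k}s_k$. Finally I would invoke the definition of $E_p$ summability for series stated just above the corollary: a series is $E_p$ summable to $\nu$ exactly when its sequence of partial sums is $E_p$ summable to $\nu$. Combining this with the previous step yields that $\sum u_n$ is $E_p$ summable to $\nu$, which is the claim.

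I expect no real obstacle here; the corollary is a formal consequence of Theorem \ref{theorem1} together with the bookkeeping definition that transports summability notions from sequences to series via partial sums. The only point requiring a little care is to chain the three identifications — ``the series converges'' $\Leftrightarrow$ ``its partial sums converge'', then ``Theorem \ref{theorem1} applied to the partial sums'', then ``the partial sums are $E_p$ summable'' $\Leftrightarrow$ ``the series is $E_p$ summable'' — in the correct order, which is routine.
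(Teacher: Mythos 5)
Your proposal is correct and matches the paper's intended argument exactly: the paper derives this corollary by applying Theorem \ref{theorem1} to the sequence of partial sums and invoking the definition of $E_p$ summability for series. Nothing further is needed.
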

\begin{cor}\label{alternative}
If series $\sum u_n$ of fuzzy numbers is $E_p$ summable to fuzzy number $\nu$ and  $\sqrt{n}D(u_n, \bar{0})=o(1)$, then $\sum u_n=\nu$.
\end{cor}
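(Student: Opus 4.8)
The plan is to reduce this to the sequence version already established in Theorem \ref{tauberian}, applied to the sequence of partial sums. Write $s_n=\sum_{k=0}^n u_k$. By the definition preceding the corollary, the hypothesis that $\sum u_n$ is $E_p$ summable to $\nu$ means exactly that the sequence $(s_n)$ of fuzzy numbers is $E_p$ summable to $\nu$. Consequently, the only thing that needs to be verified before invoking Theorem \ref{tauberian} is that $(s_n)$ satisfies the Tauberian hypothesis of that theorem, namely $\sqrt{n}\,D(s_{n-1},s_n)=o(1)$.

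To check this I would use the translation invariance of the metric $D$. Since $s_n=s_{n-1}+u_n$, part $(iii)$ of Proposition \ref{p02} gives $D(s_{n-1},s_n)=D(s_{n-1}+\bar{0},\,s_{n-1}+u_n)=D(\bar{0},u_n)=D(u_n,\bar{0})$. Hence $\sqrt{n}\,D(s_{n-1},s_n)=\sqrt{n}\,D(u_n,\bar{0})$, which is $o(1)$ precisely by the assumption $\sqrt{n}\,D(u_n,\bar{0})=o(1)$. Thus both hypotheses of Theorem \ref{tauberian} hold for $(s_n)$, and the theorem yields $s_n\to\nu$; by the definition of convergence of a series of fuzzy numbers this says $\sum u_n=\nu$.

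There is no genuine obstacle here: all the analytic content (the Zubkov--Serov estimate, the asymptotics of the two logarithmic terms, the $E_p\Rightarrow E_{\lceil p\rceil}$ reduction) is already packaged inside Theorem \ref{tauberian}, and this corollary is just its transcription through the correspondence ``series $\leftrightarrow$ sequence of partial sums.'' The one place that requires a (one-line) argument rather than a bare citation is the metric identity $D(s_{n-1},s_n)=D(u_n,\bar{0})$, which converts the Tauberian gap condition on $(s_n)$ into the stated growth condition on $(u_n)$; everything else is formal.
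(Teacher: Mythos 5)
Your proposal is correct and follows exactly the route the paper intends: the corollary is stated without a separate proof precisely because it is the transcription of Theorem \ref{tauberian} to the partial-sum sequence $(s_n)$, and your identity $D(s_{n-1},s_n)=D(s_{n-1}+\bar{0},s_{n-1}+u_n)=D(\bar{0},u_n)=D(u_n,\bar{0})$ via Proposition \ref{p02}$(iii)$ is the one small verification needed to match the Tauberian conditions.
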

\begin{cor}
If series $\sum u_n$ of fuzzy numbers is $E_p$ summable to fuzzy number $\nu$ and  $\sqrt{n}D(u_n, \bar{0})=O(1)$, then $(u_n)\in bs(F)$.
\end{cor}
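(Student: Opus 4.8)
The plan is to reduce this statement to the (unlabelled) boundedness theorem proved above for \emph{sequences} of fuzzy numbers, applied to the sequence $(s_n)$ of partial sums of $\sum u_n$, in exactly the way Corollary \ref{alternative} is deduced from Theorem \ref{tauberian}. Recall first that, by the definition of $E_p$ summability for a series, the hypothesis that $\sum u_n$ is $E_p$ summable to $\nu$ says precisely that the sequence $(s_n)$, with $s_n=\sum_{k=0}^{n}u_k$, is $E_p$ summable to $\nu$. So the sequence $(s_n)$ already meets the summability half of the hypothesis of the boundedness theorem, and it remains only to translate the Tauberian condition $\sqrt{n}\,D(u_n,\bar 0)=O(1)$ into a consecutive-gap condition on $(s_n)$.

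Next I would record the elementary identity $D(s_{n-1},s_n)=D(u_n,\bar 0)$. Since $s_n=s_{n-1}+u_n$ and $s_{n-1}+\bar 0=s_{n-1}$ by $(i)$ of Lemma \ref{lemma}, the translation invariance $(iii)$ of Proposition \ref{p02} gives $D(s_{n-1},s_n)=D(s_{n-1}+\bar 0,\,s_{n-1}+u_n)=D(\bar 0,u_n)=D(u_n,\bar 0)$. Consequently $\sqrt{n}\,D(s_{n-1},s_n)=\sqrt{n}\,D(u_n,\bar 0)=O(1)$ by hypothesis, so $(s_n)$ satisfies both hypotheses of the boundedness theorem.

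Applying that theorem to $(s_n)$ then yields $(s_n)\in\ell_\infty(F)$, i.e. the sequence of partial sums of $\sum u_n$ is bounded, which is exactly the assertion $(u_n)\in bs(F)$. There is essentially no obstacle in this argument; the one point that deserves a line of care is the identity $D(s_{n-1},s_n)=D(u_n,\bar 0)$, where the usual fuzzy-arithmetic difficulty (the absence of additive inverses, cf. $(ii)$ of Lemma \ref{lemma}) is avoided because we only use translation invariance of the metric $D$ rather than any cancellation of fuzzy numbers. If one prefers, one may give the proof in a single stroke by substituting $D(s_{n-1},s_n)=D(u_n,\bar 0)$ into the final displayed chain of inequalities in the proof of the boundedness theorem, with each $u_k$ there replaced by $s_k$, obtaining $D\!\left(s_n,t^{q}_{(q+1)n}\right)=O(1)$ and hence $D(s_n,\bar 0)\le D\!\left(s_n,t^{q}_{(q+1)n}\right)+D\!\left(t^{q}_{(q+1)n},\nu\right)+D(\nu,\bar 0)=O(1)$.
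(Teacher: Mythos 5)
Your proposal is correct and follows exactly the route the paper intends: the paper gives no explicit proof of this corollary, but deduces it from the preceding boundedness theorem for sequences via the passage to partial sums, which is precisely your argument, including the key identity $D(s_{n-1},s_n)=D(u_n,\bar 0)$ obtained from translation invariance of $D$.
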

The proof of Theorem \ref{tauberian} with Corollary \ref{alternative} yields also the following Tauberian theorem of K. Knopp in case of real numbers.
\begin{thm}\cite{knopp2}
If series $\sum a_n$ of real numbers is $E_p$ summable to $s$ and  $a_n=o\left(\frac{1}{\sqrt{n}}\right)$, then  $\sum a_n=s$.
\end{thm}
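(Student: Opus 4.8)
The plan is to deduce this statement as the real-valued specialization of Corollary \ref{alternative}, using the canonical embedding $r \mapsto \bar r$ of $\mathbb{R}$ into $E^1$. First I would record the elementary facts about this embedding: since $[\bar r]_\alpha = [r,r]$ for every $\alpha \in [0,1]$, one has $D(\bar r, \bar s) = |r - s|$, $\overline{r+s} = \bar r + \bar s$, and $\overline{\lambda r} = \lambda \bar r$ for $\lambda \geq 0$; iterating $(iii)$ and $(iv)$ of Lemma \ref{lemma} then gives $\overline{\sum_k c_k r_k} = \sum_k c_k\,\bar{r_k}$ whenever the weights $c_k$ are nonnegative, which covers the Euler weights $\binom{n}{k}p^{n-k}(p+1)^{-n}$.

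Now let $\sum a_n$ be a series of reals with partial sums $s_n = \sum_{k=0}^n a_k$, and consider the associated series $\sum \bar{a_n}$ of fuzzy numbers. Its partial sums are $\overline{s_n}$ and its Euler means are $\overline{t^p_n}$, where $t^p_n = (p+1)^{-n}\sum_{k=0}^n\binom{n}{k}p^{n-k}s_k$ is the classical Euler mean. Because $D(\overline{t^p_n}, \bar s) = |t^p_n - s|$, the hypothesis that $\sum a_n$ is $E_p$ summable to $s$ says precisely that $\sum \bar{a_n}$ is $E_p$ summable to $\bar s$ in the fuzzy sense; similarly, $D(\bar{a_n}, \bar 0) = |a_n|$ turns the Tauberian condition $a_n = o(1/\sqrt n)$ into $\sqrt n\, D(\bar{a_n}, \bar 0) = o(1)$.

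Finally, Corollary \ref{alternative} applies verbatim to $\sum \bar{a_n}$ and yields $\sum \bar{a_n} = \bar s$, i.e.\ $D(\overline{s_n}, \bar s) = |s_n - s| \to 0$, which is exactly $\sum a_n = s$. The only point needing (routine) care is the verification in the first paragraph that the embedding intertwines the real and fuzzy notions of $E_p$ summability; thereafter nothing further is required. It is worth noting \emph{why} this gives an alternative to Knopp's argument: unrolled through Corollary \ref{alternative} and Theorem \ref{tauberian}, the proof first replaces $E_p$ by $E_q$ with $q = \ceil{p}$ a positive integer, then controls the transfer distance $D(u_n, t^q_{(q+1)n})$ by the Zubkov--Serov estimate of Theorem \ref{CDF} for the binomial CDF and by the asymptotics of the two $\mathrm{erf}$-terms, a route independent of Knopp's original method.
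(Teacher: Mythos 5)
Your proposal is correct and follows essentially the same route as the paper, which obtains Knopp's theorem as the real-valued specialization of Corollary \ref{alternative} (itself a consequence of Theorem \ref{tauberian} applied to partial sums); you have merely written out the routine verification, left implicit in the paper, that the embedding $r\mapsto\bar r$ intertwines the real and fuzzy Euler means and turns $D$ into the absolute value.
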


\end{document}